\newtheorem*{thm}{Theorem}
\newtheorem{lemma}{Lemma}
\theoremstyle{definition}
\theoremstyle{remark}
\begin{document}

\title[]{An upper bound on the Hot Spots Constant}
\subjclass[2020]{35B50, 35J05, 60J65.} 
\thanks{S.S. is supported by the NSF (DMS-2123224) and the Alfred P. Sloan Foundation.}

\author[]{Stefan Steinerberger}
\address{Department of Mathematics, University of Washington, Seattle, WA 98195, USA}
\email{steinerb@uw.edu}

\begin{abstract} Let $D \subset \mathbb{R}^d$ be a bounded, connected domain with smooth boundary and let $-\Delta u = \mu_1 u$
be the first nontrivial eigenfunction of the Laplace operator with Neumann boundary conditions. We prove
$$ \max_{x \in D} ~u(x) \leq 60 \cdot \max_{x \in \partial D} ~u(x)$$ 
and emphasize that this constant is uniform among all connected domains with smooth boundary in all dimensions. In particular, 
the Hot Spots Conjecture cannot fail by an arbitrary factor. The inequality also holds for other (Neumann-)eigenfunctions (possibly with a different constant) provided the eigenvalue is smaller than the first Dirichlet eigenvalue.
An example of Kleefeld shows that the optimal constant is at least $1 + 10^{-3}$.
\end{abstract}

\maketitle

\section{Introduction  }
 Let $D \subset \mathbb{R}^d$ be a bounded, connected domain with smooth boundary and let $u$ denote the first
nontrivial Laplacian eigenfunction with Neumann boundary conditions on $\partial D$, i.e.
\begin{align*}
-\Delta u &= \mu_1 u \quad \mbox{in}~ D\\
\frac{\partial u}{\partial \nu} &= 0 \quad \mbox{on}~\partial D,
\end{align*}
where $\mu_1 > 0$ and $\nu$ denotes the normal derivative on the boundary. The function $u$ the long-time behavior of generic solutions of the heat equation. 
One might expect that if $D$ is `simple', then the maximum and minimum of $u$ should be at the boundary, this is the 1974 Hot Spots Conjecture of Rauch \cite{b3}. The conjecture is widely assumed to be true for convex domains and possibly even for simply connected domains if $d=2$ -- this is being actively investigated and there are many settings where it is known to hold
 \cite{b0, atar, b3, ban, bass, beck, freitas, jerison, jerison2, judge, miya, miya2, pascu, bart, stein2}. 

\begin{center}
\begin{figure}[h!]
\begin{tikzpicture}[scale=0.8]
\draw [thick] (2,0) -- (1,1) -- (0, 0.1) -- (-1, 1) -- (-2,0);
\draw [thick] (2,0) to[out=0, in=0] (2,-2);
\draw [thick] (2,-0.1) to[out=0, in=0] (2,-1.9);
\draw [thick] (2,-2) -- (1,-3) -- (0,-2.2) -- (-1, -3) -- (-2,-2);
\draw [thick] (-2,-2) to[out=180,in=180] (-2,0);
\draw [thick] (-2,-1.9) to[out=180,in=180] (-2,-0.1);
\draw [thick] (-2, -0.1) -- (-1, -0.9) -- (0, -0.1) -- (1,-0.9) -- (2,-0.1);
\draw [thick] (2, -1.9) -- (1, -1.1) -- (0, -2.0) -- (-1,-1.1) -- (-2,-1.9);
\end{tikzpicture}
\caption{A type of domain on which Hot Spots fails (see \cite{b25, b4,kleefeld}).}
\end{figure}
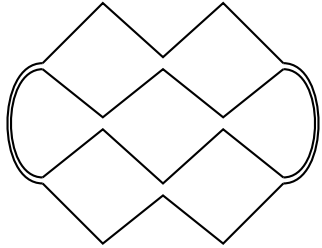
\end{center}

\section{Results} 
\subsection{Main Result.} However, there have to be at least some geometric restrictions on the domain $D$:
Burdzy \& Werner \cite{b4} constructed a planar domain with two holes where $u$ assumes its maximum strictly inside the domain. Burdzy \cite{b25} later constructed a planar counterexample with one hole. Kleefeld \cite{kleefeld} used high-precison numerics to numerically investigate examples of domains with one hole: Burdzy-type counterexamples \cite{b25} seem to be robust. Kleefeld also constructs an explicit (numerical) example of a domain for which 
$$ \|u\|_{L^{\infty}(D)} \geq (1+10^{-3}) \cdot \|u\|_{L^{\infty}(\partial D)}.$$ 
This leads to a natural question: how `wrong' can the Hot Spots conjecture be? We show that it cannot fail by more than a fixed factor.
\begin{thm}[Abridged Version]  Let $D \subset \mathbb{R}^d$ be a bounded and connected domain with smooth boundary and let $-\Delta u = \mu_1 u$
be the first nontrivial eigenfunction of the Laplace operator with Neumann boundary conditions. Then
$$ \max_{x \in D} ~u(x) \leq 60 \cdot \max_{x \in \partial D} ~u(x).$$ 
\end{thm}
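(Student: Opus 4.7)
The natural framework is the Feynman--Kac representation of $u$. Let $(B_t)_{t \geq 0}$ be standard Brownian motion started from $x \in D$ and let $\tau = \inf\{t > 0 : B_t \in \partial D\}$. By Friedlander's inequality $\mu_1 < \lambda_1(D)$, so $\mathbb{E}_x[e^{\mu_1 \tau/2}] < \infty$. Since $u$ is a Neumann eigenfunction, the reflected heat semigroup satisfies $P_t^N u = e^{-\mu_1 t/2} u$; applying the strong Markov property at $\tau$ (reflected Brownian motion agrees with $B$ until it hits $\partial D$) and letting $t \to \infty$, one obtains
$$u(x) \;=\; \mathbb{E}_x\!\left[e^{\mu_1 \tau/2}\, u(B_\tau)\right].$$
A minimum-principle argument for the function $w(x) := \mathbb{E}_x[e^{\mu_1 \tau/2}]$, which solves $-\Delta w = \mu_1 w$ in $D$ with $w = 1$ on $\partial D$, shows $w \geq 1$ in $D$, and in particular $M_+ := \max_{\partial D} u$ must be positive (otherwise the representation forces $u \leq 0$, contradicting $\int_D u = 0$). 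Bounding $u(B_\tau) \leq M_+$ yields
$$u(x) \;\leq\; M_+ \cdot w(x),$$
so the theorem reduces to the universal estimate $\sup_{x \in D} w(x) \leq 60$.

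To control $w$, I would use a time-splitting bootstrap. For any $T > 0$,
$$w(x) \;\leq\; e^{\mu_1 T/2} \;+\; e^{\mu_1 T/2}\,\|w\|_\infty\,\sup_{y \in D} \mathbb{P}_y[\tau > T],$$
the first term from $\{\tau \leq T\}$ and the second from the strong Markov property at time $T$ applied on $\{\tau > T\}$. Whenever $e^{\mu_1 T/2}\sup_y \mathbb{P}_y[\tau > T] < 1$, this closes into
$$\|w\|_\infty \;\leq\; \frac{e^{\mu_1 T/2}}{1 - e^{\mu_1 T/2}\sup_y \mathbb{P}_y[\tau > T]},$$
and one chooses $T$ optimally.

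The main obstacle is producing a constant that is \emph{uniform in dimension}. A naive bound $\sup_y \mathbb{P}_y[\tau > T] \lesssim e^{-\lambda_1 T}$ forces $T \sim 1/(\lambda_1 - \mu_1)$, and the gap $\lambda_1 - \mu_1$ is not uniformly controlled: already for the unit ball in $\mathbb{R}^d$ the ratio $\mu_1/\lambda_1$ tends to $1$ as $d \to \infty$, and the corresponding estimate on $\|w\|_\infty$ (equivalently, on $w(0)$ computed explicitly via Bessel functions) grows exponentially in $d$. To obtain a truly universal constant, one must leverage the fact that $u$ is itself a Neumann eigenfunction, not merely a function satisfying $-\Delta u = \mu u$ for some $\mu < \lambda_1$. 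The promising ingredients are: the mean-zero constraint $\int_D u = 0$, which lets one replace the crude bound $u(B_\tau) \leq M_+$ by an estimate capturing cancellation of positive and negative boundary contributions in the Feynman--Kac integrand (e.g.\ by applying the representation to $u - c$ for a cleverly chosen constant $c$); Hessian information at an interior maximum $x_\ast$, where $\Delta u(x_\ast) = -\mu_1 \tilde M$ forces a plateau $\{u \geq \tilde M/2\}$ of radial extent at least $\mu_1^{-1/2}$ in some direction; and Szeg\H{o}--Weinberger-type spectral comparisons to control survival probabilities geometrically. Turning these inputs into a refined bootstrap that produces the explicit constant $60$ is the heart of the proof, and in light of Kleefeld's lower bound of $1+10^{-3}$ the constant is presumably very far from sharp.
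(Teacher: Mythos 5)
Your opening step --- a Feynman--Kac identity with the first exit time $\tau$, followed by a time-splitting at a fixed horizon $T$ --- is essentially the same engine as the paper's Lemma 1, which runs (reflected) Brownian motion to a fixed time $t$ and splits according to whether the path has hit $\partial D$ by then. In both formulations the quantity that must be made less than $1$ is (up to normalization) $e^{\mu_1 T}\sup_{y\in D}\mathbb{P}_y[\tau > T]$. But you abandon the approach on the strength of a claim that is simply false: you assert that for the unit ball in $\mathbb{R}^d$ the ratio $\mu_1/\lambda_1$ tends to $1$ as $d\to\infty$. In fact it tends to $0$: for the unit ball $\mu_1 = p_{d/2,1}^2 < d+2$ (Lorch--Szeg\H{o}) while $\lambda_1 = j_{d/2-1,1}^2 \geq (d/2-1)(d/2+1)$, so $\mu_1/\lambda_1 \leq 4/(d-2)$. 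The paper's Lemmas 2--3 turn exactly this into the decisive ingredient you are missing: Faber--Krahn (the ball minimizes $\lambda_1$ at fixed volume) combined with Szeg\H{o}--Weinberger (the ball maximizes $\mu_1$ at fixed volume) give $\mu_1/\lambda_1 \leq \alpha_d$ with $\alpha_2\approx 0.587$ and $\alpha_d\to 0$, uniformly over \emph{all} bounded domains. Your worry about the gap $\lambda_1 - \mu_1$ dissolves because the bootstrap only needs the ratio bounded away from $1$, and it is, by a universal dimension-dependent constant. You list Szeg\H{o}--Weinberger among your ``promising ingredients'' and then discard the whole scheme; it is in fact all that is needed to close it.

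There is a second issue you would have hit next, and the paper handles it too: bounding $\sup_y\mathbb{P}_y[\tau > T]$ via the on-diagonal heat kernel estimate $p_T(x,x)\leq (4\pi T)^{-d/2}$ introduces a factor $|D|$, so the optimal $T$ a priori depends on the volume. The paper removes this with a second application of Szeg\H{o}--Weinberger in the form $\mu_1\leq c_d^{2/d}p_{d/2,1}^2|D|^{-2/d}$; the substitution $\alpha = t|D|^{-2/d}$ then makes the final inequality depend only on $d$, and optimizing over $\alpha$ yields $60$ in $d=2$, decreasing toward $\sqrt{e^e}\approx 3.89$ as $d\to\infty$. The other ideas you float --- exploiting $\int_D u = 0$ to cancel positive and negative boundary contributions, and a Hessian-induced plateau near the interior maximum --- play no role in the paper's proof, and you give no quantitative route to make them bite; they are speculative and not a substitute for the Faber--Krahn/Szeg\H{o}--Weinberger spectral comparisons.
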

\textit{Remarks.}
\begin{enumerate}
\item The optimal constant might conceivably be quite close to 1 (at least for $d=2$). An example of Kleefeld \cite{kleefeld} shows that
it is bigger than $1 + 10^{-3}$. 
\item The proof leads to better results in higher dimensions: the constant 60 can be replaced by 23 in $d=3$ dimensions and by 15 in $d=4$ dimensions. Asymptotically, as $d \rightarrow \infty$, the
constant obtained by the argument converges to $\sqrt{e^e} \sim 3.89 \dots$
\item Smoothness of the boundary can be replaced by weaker assumptions, we have not tried to optimize this condition.
\item The proof also applies for other (Neumann-)eigenfunctions $-\Delta u = \mu u$ whose eigenvalue $\mu$ is smaller than the
first Dirichlet eigenvalue (see \S 2.2).\\
\end{enumerate}

\subsection{Other eigenfunctions.}
The proof shows a slightly stronger result: we never need $-\Delta u = \mu u$ to be the \textit{first} eigenfunction of the Neumann-Laplacian, we only require that the associated eigenvalue satisfies
$$ \mu < \lambda_1 = \inf_{f \in C^{\infty}_c(D) \atop f \not\equiv 0} \frac{\int_{D} |\nabla f|^2 dx}{\int_{D} f^2 dx},$$
i.e. that the (Neumann-)eigenvalue is smaller than the first nontrivial Dirichlet eigenvalue. The inequality $\mu_1 < \lambda_1$ is elementary and follows from the variational characterization. 
Payne \cite{pay} showed for convex $D \subset \mathbb{R}^2$ with $C^2-$boundary, we also have $\mu_2 < \lambda_1$. Levine \& Weinberger \cite{levine} (see also Aviles \cite{aviles})
show that for a bounded convex domain $D \subset \mathbb{R}^d$ with sufficiently smooth boundary $\mu_d < \lambda_1$ (see also Friedlander \cite{fried3}). It is also not difficult to construct, in any dimension $d \geq 2$ and for any $n \in \mathbb{N}$, a connected domain $D_n \subset \mathbb{R}^d$ for which $\mu_n < 0.5 \cdot \lambda_1$ (think of many balls that are mutually connected by very thin tubes). 

\begin{thm}[General Version] Let $D \subset \mathbb{R}^d$ be a bounded and connected domain with smooth boundary and let $-\Delta u_k = \mu_k u_k$ be an eigenfunction of the Laplacian with Neumann boundary conditions. If $ \mu_k < \lambda_1$, then there exists a constant $0 < c < \infty$ depending only on the ratio $\mu_k/\lambda_1$ and the dimension such that
 $$ \max_{x \in D}  ~u_k(x) \leq c(\mu_k/\lambda_1,d) \cdot \max_{x \in \partial D} u_k(x).$$ 
\end{thm}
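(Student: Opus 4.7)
\emph{Proof plan.} The strategy is to reduce the claim to an inhomogeneous Helmholtz problem on the super-level set $\Omega := \{u_k > m\}$, where $m := \max_{\partial D} u_k$, and to bootstrap an $L^2$ estimate there to an $L^\infty$ estimate via Moser iteration, using the scale-invariant ratio $\mu_k/\lambda_1$ at every step.

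Setting $M := \max_D u_k$, we may assume $M > m$; moreover $m > 0$ automatically, for if $u_k \le 0$ on $\partial D$ then the refined maximum principle for $-\Delta - \mu_k$ (valid because $\mu_k < \lambda_1$) together with $\int_D u_k = 0$ would force $u_k \equiv 0$. The set $\Omega$ is open and compactly contained in $D$, and $v := (u_k - m)_+$ belongs to $H^1_0(D)$ with support in $\Omega$, vanishing on $\partial \Omega$. Multiplying $-\Delta u_k = \mu_k u_k$ by $v$ and integrating by parts on $\Omega$ produces the identity
\[
 \int_\Omega |\nabla v|^2 \, dx \;=\; \mu_k \int_\Omega v^2 \, dx \;+\; \mu_k m \int_\Omega v \, dx .
\]
Combining with the Poincar\'e inequality on $H^1_0(\Omega)$---which by domain monotonicity of Dirichlet eigenvalues satisfies $\lambda_1(\Omega) \ge \lambda_1(D) > \mu_k$---and Cauchy--Schwarz $\|v\|_1 \le |\Omega|^{1/2} \|v\|_2$, yields the $L^2$ bound
\[
 \|v\|_{L^2(\Omega)} \;\le\; \frac{\mu_k \, m \, |\Omega|^{1/2}}{\lambda_1 - \mu_k} .
\]

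To pass from this to the desired $L^\infty$ bound I would iterate: $v$ solves the bounded-coefficient elliptic problem $-\Delta v = \mu_k v + \mu_k m$ in $\Omega$ with zero Dirichlet data, so testing against $v^{p-1}$ and invoking the Sobolev embedding $H^1_0 \hookrightarrow L^{2d/(d-2)}$ (with the purely dimensional constant $S_d$) produces a cascade of $L^{p_n}$--$L^{p_{n+1}}$ inequalities along the Moser sequence $p_{n+1} = p_n \cdot d/(d-2)$. Starting from the $L^2$ estimate above and iterating yields $\|v\|_{L^\infty(\Omega)} \le c(\mu_k/\lambda_1, d) \cdot m$, whence $M - m \le c \cdot m$ and the theorem follows.

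The principal obstacle is bookkeeping: the constant $c$ must depend only on $\mu_k/\lambda_1$ and $d$, and not on $|\Omega|$ or any other shape quantity. The volume factors $|\Omega|^\alpha$ that arise at each Moser step from H\"older's inequality are neutralized using Faber--Krahn ($\lambda_1(\Omega) |\Omega|^{2/d} \ge c_d$) together with $\lambda_1(\Omega) \ge \lambda_1$; each step then contributes a factor of order $\mu_k/\lambda_1 < 1$, so the resulting geometric series converges. As $\mu_k/\lambda_1 \uparrow 1$ the number of effective iterations diverges and $c(\mu_k/\lambda_1, d) \to \infty$, consistent with the one-dimensional model in which the Helmholtz extension of a constant boundary value on an interval is $\tilde\phi(x) = 1/\cos\bigl((\pi/2)\sqrt{\mu/\lambda_1}\bigr)$, which blows up exactly in the same regime.
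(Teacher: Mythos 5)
Your reduction to the super-level set $\Omega = \{u_k > m\}$, the energy identity on $\Omega$, and the resulting $L^2$ bound $\|v\|_{L^2(\Omega)} \le \mu_k m|\Omega|^{1/2}/(\lambda_1-\mu_k)$ are correct, and they constitute a genuinely different, elliptic--variational route from the paper's, which works with reflected Brownian motion and the Dirichlet heat kernel. The positivity of $m$ via the refined maximum principle for $-\Delta-\mu_k$ and the domain monotonicity $\lambda_1(\Omega)\ge\lambda_1(D)$ are also fine.

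The gap is in the claimed $L^2\to L^\infty$ bootstrap. Faber--Krahn reads $\lambda_1(\Omega)|\Omega|^{2/d}\ge c_d$, i.e.\ it gives a \emph{lower} bound on $|\Omega|^{2/d}$; it therefore cannot ``neutralize'' the \emph{positive} powers of $|\Omega|$ that H\"older injects into a Moser iteration---for that one would need an \emph{upper} bound on $|\Omega|$, which is simply not available from $\mu_k<\lambda_1$. Concretely, testing $-\Delta v=\mu_k v+\mu_k m$ against $v^{p-1}$ gives
$$\frac{4(p-1)}{p^2}\int_\Omega|\nabla v^{p/2}|^2 \;=\; \mu_k\int_\Omega v^p + \mu_k m\int_\Omega v^{p-1},$$
and absorbing $\mu_k\int v^p$ into the left side (via Poincar\'e, or via H\"older$+$Sobolev which reintroduces $|\Omega|^{2/d}$) requires $\frac{p^2}{4(p-1)}\cdot\frac{\mu_k}{\lambda_1(\Omega)}<1$, which fails once $p$ is large: with $\mu_k/\lambda_1$ fixed, the iteration stalls at a finite exponent, so the claim that ``each step contributes a factor of order $\mu_k/\lambda_1<1$'' misdescribes how the Moser constants actually behave. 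Dimensionally, any $L^2\to L^\infty$ bound for this Dirichlet problem on $\Omega$ is governed by a scale-invariant quantity such as $\mu_k|\Omega|^{2/d}$ (equivalently $\mu_k\|T_\Omega\|_\infty$, with $T_\Omega$ the torsion function of $\Omega$), and Faber--Krahn bounds this only from \emph{below}. Producing the needed upper bound is precisely where the paper expends its effort, invoking the Szeg\H{o}--Weinberger inequality $\mu_1|D|^{2/d}\le c_d^{2/d}p_{d/2,1}^2$; no analogous control is available for your super-level set $\Omega$, and without a replacement for that ingredient the bootstrap does not close.
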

We note that the constant $c(\mu_k/\lambda_1,d)$ can be explicitly computed for any choice of parameters (we refer to the proof for details)
It could be interesting to understand whether and to what extent the limitation $\mu < \lambda_1$ is necessary or whether this could be further relaxed.

\subsection{Other equations.} There is a bigger picture here: to broad families of elliptic equations, it is possible to associate a drift-diffusion dynamical system. Eigenfunctions
of the operator diagonalize the system and have a profile under the flow which changes multiplicatively -- however, the profile also has a variational characterization from which we can infer
that the first nontrivial Neumann eigenvalue is smaller than the first nontrivial Dirichlet eigenvalue:  the associated drift-diffusion hits the boundary quickly. In order for this to
make sense in terms of the slower decay of the Neumann eigenfunction, the solution at the boundary cannot be too small. This type of reasoning can immediately applied to
more general elliptic equations and to the manifold setting -- however, getting a uniform constant will require at least some assumptions on the PDE and/or the underlying geometry. We believe this to be an 
interesting avenue for further research.

\section{Proof}
We will only give a proof of the abridged version: getting a uniform constant requires more work, the proof of the general version follows easily from the proof of the abridged version. 
 The proof decouples into the following steps.
 \begin{enumerate}
 \item \S 3.1 deduces an elementary inequality for solutions of $ -\Delta u = \mu u$ with Neumann conditions on domains $D \subset \mathbb{R}^d$ with smooth boundaries in terms of the (Dirichlet) heat kernel $p_t(\cdot, \cdot): D \times D \rightarrow \mathbb{R}$.
 \item \S 3.2 discusses an estimate on the asymptotic behavior of the optimal constant in the inequality $\mu_1 \leq c \cdot \lambda_1$ and how it depends on the dimension. This section does not require any new ideas and follows from combining existing results: the Faber-Krahn inequality \cite{faber, krahn}, the Szeg\H{o}-Weinberger inequality \cite{szego, weinberger} and an estimate of Lorch \& P. Szeg\H{o} \cite{lorch} concerning the first nontrivial root of a special function.
 \item \S 3.3 quantifies the following phenomenon: the heat kernel $p_t(\cdot, \cdot)$ decays exponentially at rate $\exp(-\lambda_1 t)$. This implies that for $\alpha < \lambda_1$, it is possible to obtain decay estimates for $e^{\alpha t} p_t(\cdot, \cdot)$. \S 3.2 will imply that we can pick $\alpha = \mu_1$ which will be useful in terms of the inequality proved in \S 3.1.
 \item \S 3.4 combines all the arguments to finish the proof.
  \end{enumerate}

\subsection{An upper bound on $\|u\|_{L^{\infty}(D)}$.} The goal of this section is to deduce a general inequality for Neumann eigenfunctions. The argument is not sensitive to the geometry of $D$, however, we require the boundary to be sufficiently regular so that reflected Brownian motion exists (smoothness more than suffices).

\begin{lemma} Let $D$ be a bounded, connected domain with smooth boundary and let $u$ be a nontrivial solution of $-\Delta u = \mu u$ with Neumann boundary conditions. Then, for all $t>0$,
$$1 \leq e^{\mu t}\int_{D} p_{t}(x_0, y)  dy + e^{\mu t}\left(1 -  \int_{D} p_{t}(x_0, y) dy\right)  \frac{\|u\|_{L^{\infty}(\partial D)}}{\|u\|_{L^{\infty}(D)}},$$
where $p_t(x,y)$ denotes the Dirichlet heat kernel on $D$.
\end{lemma}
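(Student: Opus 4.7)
The plan is to use the probabilistic representation via reflected Brownian motion. Let $(X_s)_{s\ge 0}$ denote reflected Brownian motion on $\overline{D}$ (well-defined because $\partial D$ is smooth); its semigroup is the Neumann heat semigroup, and since $u$ is a Neumann eigenfunction with $-\Delta u = \mu u$, this yields the key identity
$$\mathbb{E}_{x_0}[u(X_t)] = e^{-\mu t}\, u(x_0)$$
for every $x_0\in D$ and every $t>0$ (equivalently, $s\mapsto e^{-\mu s}u(X_s)$ is a martingale, because the Neumann boundary condition makes the boundary contribution in It\^o's formula vanish). First I would choose $x_0\in D$ at which $|u|$ attains its global maximum on $\overline{D}$ and, after replacing $u$ by $-u$ if necessary, assume $u(x_0)=\|u\|_{L^\infty(D)}>0$.

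Next I would introduce the hitting time $\tau=\inf\{s\ge 0: X_s\in\partial D\}$ and split
$$\mathbb{E}_{x_0}[u(X_t)] = \mathbb{E}_{x_0}[u(X_t);\tau>t] + \mathbb{E}_{x_0}[u(X_t);\tau\le t].$$
On $\{\tau>t\}$ the reflected process has not yet met $\partial D$, so its law at time $t$ is given by the Dirichlet sub-probability density $p_t(x_0,\cdot)$; hence
$$\mathbb{E}_{x_0}[u(X_t);\tau>t] = \int_D u(y)\, p_t(x_0,y)\,dy \;\le\; \|u\|_{L^\infty(D)}\int_D p_t(x_0,y)\,dy.$$
For the complementary event I apply the strong Markov property at $\tau$ together with the eigenfunction identity restarted from $X_\tau$, producing
$$\mathbb{E}_{x_0}[u(X_t);\tau\le t] = \mathbb{E}_{x_0}\!\bigl[e^{-\mu(t-\tau)}u(X_\tau);\tau\le t\bigr].$$
Because $X_\tau\in\partial D$ by path continuity, $|u(X_\tau)|\le \|u\|_{L^\infty(\partial D)}$, and because $\mu>0$ and $\tau\le t$, the exponential factor is at most $1$; this bounds the right-hand side by
$$\|u\|_{L^\infty(\partial D)}\,\mathbb{P}_{x_0}(\tau\le t) = \|u\|_{L^\infty(\partial D)}\Bigl(1-\int_D p_t(x_0,y)\,dy\Bigr).$$

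Adding the two estimates, equating with $\mathbb{E}_{x_0}[u(X_t)]=e^{-\mu t}\|u\|_{L^\infty(D)}$, dividing by $\|u\|_{L^\infty(D)}$, and multiplying by $e^{\mu t}$ produces the stated inequality. The only subtle point is justifying the semigroup identity $\mathbb{E}_{x_0}[u(X_t)]=e^{-\mu t}u(x_0)$ for reflected Brownian motion, which is exactly the place where smoothness of $\partial D$ is invoked: it guarantees that reflected Brownian motion exists as a well-defined continuous Markov process whose generator is the Neumann Laplacian. Everything else is a textbook application of the strong Markov property and crude pointwise bounds, consistent with the author's remark that the argument is insensitive to the geometry of $D$.
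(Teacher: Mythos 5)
Your proposal is correct and follows essentially the same route as the paper: both start from the semigroup identity $\mathbb{E}_{x_0}[u(X_t)]=e^{-\mu t}u(x_0)$ for reflected Brownian motion, split on the event that the path has or has not reached $\partial D$ by time $t$, identify the first piece with the Dirichlet heat kernel, and bound the second via the (strong) Markov property together with $e^{-\mu(t-\tau)}\le 1$ and $u(X_\tau)\le\|u\|_{L^\infty(\partial D)}$. The only cosmetic difference is that you phrase the boundary case cleanly in terms of the hitting time $\tau$ and the strong Markov property, whereas the paper conditions on the first hitting time $t_0$ informally; the mathematical content is identical.
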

\begin{proof}
Let us
assume without loss of generality (after possibly replacing $u$ by $-u$) that $u$ assumes its maximum inside the
domain $D$ and
$$ u(x_0) = \| u\|_{L^{\infty}(D)} > 0.$$
We will show that the maximum value on the boundary cannot be too small.
Solving the heat equation with $u$ as initial datum and Neumann boundary conditions is simple because $u$ is an eigenfunction, the solution is
$ u(t,x) = e^{-\mu t} u(x).$
The probabilistic interpretation of the heat equation then allows us to write
$$ u(t,x) = \mathbb{E}_x u(\omega(t)),$$
where the expectation $\mathbb{E}_x$ is taken with respect to Brownian motions started in $x$ and reflected on the boundary.
We now fix a time $t > 0$ and use the probabilistic interpretation to derive an upper bound on $u(x_0) = \|u\|_{L^{\infty}(D)}$.
Let us consider a fixed Brownian path $\omega_{x_0}(s)$ started in $x_0$ and running for time $0 \leq s \leq t$. We distinguish
two cases
\begin{enumerate}
\item $\omega_{x_0}(s)$ never touches the boundary for all $0 \leq s \leq t$
\item there exists a first time $0 \leq t_0 \leq  t$ such that $\omega_{x_0}(t_0) \in \partial D$.
\end{enumerate}
The first case is naturally related to the evolution of the heat equation with \textit{Dirichlet} boundary conditions
\begin{align*}
\frac{\partial u}{\partial t} &= \Delta u \qquad \mbox{in}~D \\
u &=0 \qquad \quad \mbox{on}~\partial D.
\end{align*}
We recall that the Dirichlet heat equation with initial data $u(0,x) = f(x)$ has an explicit solution
$$ u(t,x) = \int_{D} p_t(x,y) f(y) dy,$$
where
$p_t:D \times D \rightarrow \mathbb{R}$ is the heat kernel associated to the Dirichlet heat equation on $D$. Using $(\phi_n)_{n=1}^{\infty}$ to denote the sequence of $L^2-$normalized eigenfunctions of
$ -\Delta \phi_n = \lambda_n \phi_n$
with Dirichlet boundary conditions, we have an explicit expression for the heat kernel
$$ p_t(x,y) = \sum_{n=1}^{\infty} e^{-\lambda_n t} \phi_n(x) \phi_n(y).$$
At the same time, the Dirichlet heat equation has an explicit stochastic representation as
$$ u(t,x) =\mathbb{E}_{w} u(\omega(t)),$$
where $\omega(t)$ is a Brownian motion started in $x$ and running for time $t$ with the additional rule that, once hitting the boundary, it remains there for all time (the boundary is `sticky').
Since we are dealing with zero boundary conditions, the particles at the boundary do not contribute to the expectation and the expectation is only taken with respect to Brownian
motion that never touches the boundary. This reproduces exactly the first case in our case distinction. Moreover, it implies that the density of the particles who have never touched the boundary is given by $p_t(x_0, \cdot)$
and the likelihood of Brownian motion starting in $x_0$, running for time $t$ and never hitting the boundary is given by
$$ \mathbb{P}\left(\forall~0 \leq s \leq t: \omega_{x_0}(s) \notin \partial D\right) = \int_{D} p_{t}(x_0, y) dy.$$
This shows that we have a very precise understanding of the first case. As for the second case, we will not
derive such a precise representation and instead bound it from above: suppose $\omega_{x_0}(t_0) \in \partial \mathbb{D}$.
We can use Markovianity and the property that $u$ is an eigenfunction to take an expectation over all Brownian
paths started in $t_0$ and running for $t-t_0$ units of time:
$$ \mathbb{E}_{\omega}~ \omega(t-t_0) = e^{-\mu (t-t_0)} u(\omega_{x_0}(t_0)) \leq \|u\|_{L^{\infty}(\partial D)}.$$
Combining both estimates, we have
\begin{align*}
 u(t, x_0) &\leq  \int_{D} p_{t}(x_0, y) u(y) dy + \left(1 -  \int_{D} p_{t}(x_0, y) dy\right)  \|u\|_{L^{\infty}(\partial D)} \\
 &\leq \|u\|_{L^{\infty}(D)} \int_{D} p_{t}(x_0, y)  dy + \left(1 -  \int_{D} p_{t}(x_0, y) dy\right)  \|u\|_{L^{\infty}(\partial D)}.
 \end{align*}
 Plugging in the explicit solution of the heat equation $u(t,x) = e^{-\mu t} u(x)$ and using that $u(x_0) = \|u\|_{L^{\infty}(D)}$, we obtain the desired inequality
 \begin{align*}
\|u\|_{L^{\infty}(D)} &\leq e^{\mu t} \|u\|_{L^{\infty}(D)} \int_{D} p_{t}(x_0, y)  dy + e^{\mu t}\left(1 -  \int_{D} p_{t}(x_0, y) dy\right)  \|u\|_{L^{\infty}(\partial D)}.
 \end{align*}
 which can also be written as
  \begin{align*}
1 &\leq  e^{\mu t}\int_{D} p_{t}(x_0, y)  dy + e^{\mu t}\left(1 -  \int_{D} p_{t}(x_0, y) dy\right)  \frac{\|u\|_{L^{\infty}(\partial D)}}{\|u\|_{L^{\infty}(D)}}.
 \end{align*}
\end{proof}
It is clear from the form of the inequality that if we can find a value of $t$ such that
$$  e^{\mu t}\int_{D} p_{t}(x_0, y)  dy < 1,$$
then this inequality implies that the maximum at the boundary cannot be too small compared to the maximum inside the domain. Moreover, for each
domain $D$, 
$$ p_t(x,y) = \sum_{k=1}^{\infty} e^{-\lambda_k t} \phi_k(x) \phi_k(y)$$
decays asymptotically like $\sim e^{-\lambda_1 t}$ and since $\mu < \lambda_1$, there always exist a $t > 0$ such that the desired quantity is
less than 1. It remains to show that all these quantities can be chosen uniformly among all domains $D$.

 \subsection{Inequalities for Eigenvalues.} We discuss an inequality for eigenvalues due to Szeg\H{o} \cite{szego} (refining earlier work of Polya \cite{pol}).
 These types of inequalities are now well-studied, we refer to \cite{ash, ash3, fil, fried, korn, levine, payne, stein}.
 
 \begin{lemma}[Szeg\H{o}, 1954] If $D$ is a planar domain, then
  $$ \mu_1 \leq 0.587 \cdot \lambda_1.$$
 \end{lemma}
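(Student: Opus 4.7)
The plan is to reduce to the unit disk, on which both eigenvalues can be computed explicitly in terms of Bessel functions. The key ingredients are two classical isoperimetric inequalities that pull in opposite directions: the Szegő--Weinberger inequality, which says that among all planar domains of a given area the first nontrivial Neumann eigenvalue $\mu_1$ is maximized by the disk, and the Faber--Krahn inequality, which says that the first Dirichlet eigenvalue $\lambda_1$ is minimized by the disk. Writing $D^{\ast}$ for the disk of the same area as $D$, these together give
\begin{equation*}
\frac{\mu_1(D)}{\lambda_1(D)} \;\leq\; \frac{\mu_1(D^{\ast})}{\lambda_1(D^{\ast})}.
\end{equation*}

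Since the ratio on the right is scale invariant, it suffices to evaluate it on the unit disk $B \subset \mathbb{R}^2$. On $B$, separation of variables in polar coordinates shows that Dirichlet eigenfunctions are of the form $J_n(\sqrt{\lambda}\,r)(\cos n\theta, \sin n\theta)$ with $\lambda$ chosen so that $J_n$ vanishes at $1$, and Neumann eigenfunctions are of the same form but with $J_n'$ vanishing at $1$. Hence $\lambda_1(B) = j_{0,1}^{2}$, where $j_{0,1}$ is the first positive zero of $J_0$, and $\mu_1(B) = (p'_{1,1})^2$, where $p'_{1,1}$ is the first positive zero of $J_1'$ (the $n=0$ Neumann eigenfunction is constant and hence trivial).

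Plugging in the numerical values $j_{0,1} \approx 2.4048$ and $p'_{1,1} \approx 1.8412$ yields
\begin{equation*}
\frac{\mu_1(B)}{\lambda_1(B)} \;=\; \frac{(p'_{1,1})^2}{j_{0,1}^{2}} \;\approx\; \frac{3.3898}{5.7832} \;<\; 0.587,
\end{equation*}
which combined with the inequality above proves the lemma. The only non-routine step is the reduction to the disk, for which we cite Szegő--Weinberger and Faber--Krahn; the rest is a direct computation with Bessel zeros.
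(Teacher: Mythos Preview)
Your proof is correct and follows essentially the same route as the paper: apply Faber--Krahn to bound $\lambda_1$ from below and Szeg\H{o}--Weinberger to bound $\mu_1$ from above, reducing the ratio to its value on the disk, then evaluate using the Bessel zeros $j_{0,1}\approx 2.4048$ and $\widehat j_{1,1}=p'_{1,1}\approx 1.8412$. The paper phrases the two inequalities as separate area-normalized bounds that are then divided, but the content is identical to your direct bound on the ratio $\mu_1/\lambda_1$.
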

 \begin{proof}
 The Faber-Krahn inequality \cite{faber, krahn} says that among all domains with fixed volume the ball minimizes the first eigenvalue of the Dirichlet-Laplacian and thus
 $$ \lambda_1 \geq \frac{\pi \cdot j_{0,1}^2}{|D|},$$
 where $j_{0,1} \sim 2.404\dots$ is the first positive zero of the Bessel function $J_0$. The case of equality is given when $D$ is a disk.
 There is a corresponding upper bound for $\mu_1$, the first nontrivial eigenvalue of Laplacian with Neumann boundary condition 
 $$ \mu_1 \leq \frac{\pi \cdot \widehat j_{1,1}^2}{|D|},$$
 where $\widehat j_{1,1} \sim 1.841$ is the first positive zero of $J_{1}$. This inequality was first proven by Szeg\H{o} \cite{szego} for simply connected
 planar domains and then more generally by Weinberger \cite{weinberger} who also established the corresponding analogue in higher dimensions. Equality
 is again attained on the ball.
\end{proof} 

A corresponding inequality also exists in higher dimensions with an improved constant. The argument is essentially the same: the new ingredient is the need to control roots of certain Bessel functions.
\begin{lemma}
Let $D \subset \mathbb{R}^d$ and $d \geq 3$. We have
  $$ \mu_1 \leq  \min\left\{0.587, \frac{d+2}{\left(\frac{d}{2}-\frac{2}{d}\right)^2}  \right\} \cdot \lambda_1.$$
\end{lemma}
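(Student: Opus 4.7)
The plan is to run exactly the Faber-Krahn plus Szeg\H{o}-Weinberger comparison used in the preceding $d=2$ lemma, but with Weinberger's $d$-dimensional extension of Szeg\H{o}'s inequality, and then to control the resulting Bessel-zero ratio by elementary quantities. Faber-Krahn yields $\lambda_1(D) \geq \lambda_1(B^*)$ and Weinberger's inequality yields $\mu_1(D) \leq \mu_1(B^*)$, where $B^*$ is the ball in $\mathbb{R}^d$ of the same volume as $D$. Since both eigenvalues scale like $1/R^2$, the ratio $\mu_1/\lambda_1$ on $B^*$ is volume-independent, so it suffices to bound $\mu_1(B_1)/\lambda_1(B_1)$ on the unit ball by the claimed quantity.

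For the numerator, I would plug the coordinate function $x_1$ into the Rayleigh quotient for $\mu_1$: it is mean-zero on $B_1$ by symmetry (hence admissible), and a direct computation gives $\int_{B_1} |\nabla x_1|^2 \, dx = |B_1|$ and $\int_{B_1} x_1^2 \, dx = |B_1|/(d+2)$, whence $\mu_1(B_1) \leq d+2$. For the denominator, the classical identity $\lambda_1(B_1) = j_{d/2-1,\,1}^2$ combined with the Lorch-Szeg\H{o} lower bound on the first positive zero of $J_\nu$, specialized at $\nu = d/2-1$, yields $j_{d/2-1,\,1} \geq d/2 - 2/d$. Dividing produces the second term $(d+2)/(d/2-2/d)^2$ of the minimum.

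The first term $0.587$ is precisely the exact $d=2$ ball ratio $(j'_{1,1}/j_{0,1})^2$ established in the previous lemma. To transfer it to $d \geq 3$ one checks that the ball ratio $(j'_{d/2,\,1}/j_{d/2-1,\,1})^2$ is non-increasing in $d$: this follows from the standard monotonicity properties of Bessel zeros together with the asymptotics $j_{\nu,1}, j'_{\nu,1} \sim \nu$, with direct tabulation covering the first few small values of $d$. The main obstacle is the Bessel-function bookkeeping: extracting the clean elementary form $d/2 - 2/d$ from the Lorch-Szeg\H{o} estimate, and verifying the monotonicity of the ball ratio uniformly in $d$, are the two places where some care is required; everything else is a routine consequence of the comparison principle together with the Rayleigh test-function inequality.
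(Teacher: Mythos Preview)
Your approach is essentially the paper's: reduce to the ball via Faber--Krahn and Szeg\H{o}--Weinberger, bound the numerator by plugging $f(x)=x_1$ into the Rayleigh quotient to get $\mu_1(B_1)\leq d+2$, and bound the denominator via a classical lower bound on $j_{d/2-1,1}$ (the paper attributes $j_{d/2-1,1}\geq \sqrt{(d/2-1)(d/2+1)}\geq d/2-2/d$ to Watson, and reserves the Lorch--Szeg\H{o} citation for the $p_{d/2,1}^2<d+2$ bound, so your attributions are swapped). The only substantive difference is that you explicitly tackle the $0.587$ term by proposing monotonicity of the ball ratio in $d$, whereas the paper's proof in fact establishes only the second term of the minimum here and handles the first implicitly via the specific numerical values of $p_{d/2,1}^2/j_{d/2-1,1}^2$ quoted in \S 3.4.
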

\begin{proof}
Using again the Faber-Krahn inequality \cite{faber, krahn} and the Szeg\H{o}-Weinberger
inequality \cite{szego, weinberger}, we have
  $$ \mu_1 \leq \frac{p_{d/2,1}^2}{j_{d/2-1,1}^2} \cdot \lambda_1,$$
  where $j_{d/2-1,1}$ is the first positive root of the Bessel function $J_{{d/2-1}}(x)$ and
  $p_{d/2,1}$ is the first positive root of the derivative of $x^{1-d/2} J_{d/2}(x)$. We refer to
  a paper of Ashbaugh \& Benguria \cite{ash3} which gives a clear exposition of these inequalities.
We have the known estimate (see e.g. Watson \cite{watson})
$$ j_{d/2-1,1} \geq \sqrt{(d/2-1)(d/2+1)} \geq \frac{d}{2} - \frac{2}{d}.$$
It remains to obtain an upper bound on $p_{d/2, 1}^2$. We could invoke bounds of Lorch \& Szeg\H{o} \cite{lorch} which guarantee
$$ d + \frac{8}{d+6} < p_{d/2,1}^2 < d + 2$$
and this proves the desired inequality. However, we can also give an elementary and self-contained proof of the upper bound.
The Szeg\H{o}-Weinberger inequality
$$ \mu_1(D) \leq \left( \frac{c_d}{|D|}\right)^{\frac{2}{d}} \cdot p_{d/2,1}^2,$$
where $c_d$ is the volume of the unit ball, is attained on the unit ball $\mathbb{B}_1$.
Invoking the variational characterization, we have for any $f:\mathbb{B}_1 \rightarrow \mathbb{R}$ with
mean value 0 
$$\mu_1(\mathbb{B}_1) =  p_{d/2,1}^2 \leq \frac{\int_{\mathbb{B}_1} | \nabla f(x)|^2 dx}{\int_{\mathbb{B}_1}  f(x)^2 dx}.$$
This allows us to obtain upper bounds on $\mu_1$ by plugging in test functions.
Assuming $\mathbb{B}_1$ to be centered at the origin, we choose $f(x) = x_1$. Applying Fubini, we get, for a
dimensional constant $\omega_d>0$,
$$ \int_{\mathbb{B}_1} | \nabla f(x)|^2 dx = \int_{\mathbb{B}_1} 1~  dx =\omega_d \int_{-1}^{1} \left(1-x_1^2\right)^{\frac{d-1}{2}} ~ d x_1$$
as well as
$$ \int_{\mathbb{B}_1}  f(x)^2 dx = \omega_d  \int_{-1}^{1} \left(1-x_1^2\right)^{\frac{d-1}{2}} x_1^2 ~dx_1.$$
The constant $\omega_d$ cancels and we arrive at
$p_{d/2,1}^2 \leq d+2.$
\end{proof}

\subsection{A Decay Estimate.} The goal of this section is to derive an upper bound on 
$$\mbox{the quantity} \qquad e^{\mu_1 t}\int_{D} p_{t}(x_0, y)  dy,$$
where $x_0 \in D$ is the point in which $u$ assumes its maximum.
We will not use the fact that $x_0$ is the point in which $u$ assumes its maximum and will instead bound the bigger quantity
$$ e^{\mu_1 t}\int_{D} p_{t}(x_0, y)  dy \leq  \sup_{x \in D} e^{\mu_1 t} \int_{D} p_{t}(x, y)  dy.$$
Moreover, the argument does not require that $\mu_1$ is the first Neumann eigenvalue, it only requires that $\mu_1/\lambda_1 < 1$.
\begin{lemma} Let $D$ be as above and $\mu < \lambda_1$. Then 
$$   \sup_{x \in D} \int_{D} e^{\mu t} p_t(x,y) dy \leq \frac{|D|}{(4 (1-\frac{\mu}{\lambda_1}) \pi t)^{d/2}}.$$
\end{lemma}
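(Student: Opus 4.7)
The plan is to first reduce the off-diagonal bound to a diagonal bound via a Cauchy--Schwarz step, then use the spectral expansion of the diagonal heat kernel to absorb the factor $e^{\mu t}$ into a time rescaling $t \mapsto (1 - \mu/\lambda_1) t$, and finally finish with the standard comparison between the Dirichlet and free-space heat kernels.

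First I would observe that the semigroup property combined with the Cauchy--Schwarz inequality gives
$$
p_t(x,y) = \int_D p_{t/2}(x,z)\, p_{t/2}(z,y)\, dz \leq \sqrt{p_t(x,x)\, p_t(y,y)},
$$
reducing the problem to bounding $e^{\mu t} p_t(x,x)$. Equivalently, one can get this from the spectral expansion $p_t(x,y) = \sum_n e^{-\lambda_n t}\phi_n(x)\phi_n(y)$ by Cauchy--Schwarz in $\ell^2$ with weights $e^{-\lambda_n t}$.

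Next I would exploit that on the diagonal, $p_t(x,x) = \sum_n e^{-\lambda_n t}\phi_n(x)^2$ is a sum of non-negative terms. Writing $\alpha = \mu/\lambda_1 < 1$ and using that $\lambda_n \geq \lambda_1$ for every $n \geq 1$, one has the elementary inequality $\lambda_n - \mu \geq (1-\alpha)\lambda_n$, hence $e^{-(\lambda_n - \mu)t} \leq e^{-(1-\alpha)\lambda_n t}$ termwise. This yields
$$
e^{\mu t} p_t(x,x) \;=\; \sum_n e^{-(\lambda_n - \mu)t} \phi_n(x)^2 \;\leq\; \sum_n e^{-(1-\alpha)\lambda_n t} \phi_n(x)^2 \;=\; p_{(1-\alpha)t}(x,x).
$$
The maximum principle (or the probabilistic interpretation: killed Brownian motion is dominated by free Brownian motion) gives $p_s(x,x) \leq (4\pi s)^{-d/2}$, so $p_{(1-\alpha)t}(x,x) \leq (4\pi(1-\alpha)t)^{-d/2}$.

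Combining the two steps, $e^{\mu t} p_t(x,y) \leq \sqrt{e^{\mu t} p_t(x,x)}\sqrt{e^{\mu t} p_t(y,y)} \leq (4\pi(1-\alpha)t)^{-d/2}$, and integrating this uniform bound over $y \in D$ delivers exactly the claimed inequality. I do not foresee any real obstacle: the entire argument is a two-line manipulation once one spots that the factor $e^{\mu t}$ can be moved inside the spectral sum on the diagonal and then reinterpreted as a rescaled time. The only subtlety worth noting is that this termwise comparison genuinely requires the diagonal (to ensure $\phi_n(x)^2 \geq 0$), which is precisely why the first Cauchy--Schwarz reduction is needed.
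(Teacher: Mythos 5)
Your proof is correct, and it takes a genuinely different route from the paper's while sharing the same ultimate ingredients (spectral expansion, absorbing $e^{\mu t}$ into a time rescaling via $\mu \le (\mu/\lambda_1)\lambda_n$, and domain monotonicity of the heat kernel). The difference is where Cauchy--Schwarz enters. The paper first integrates over $y$, writes $\int_D e^{\mu t}p_t(x,y)\,dy = \sum_k e^{(\mu-\lambda_k)t}\phi_k(x)\int_D\phi_k$, and then applies Cauchy--Schwarz in the spectral index $k$ together with the crude bound $\left(\int_D\phi_k\right)^2 \le |D|$; this produces two factors, one of which is the on-diagonal kernel $\sup_x p_{(1-\alpha)t}(x,x)$ and the other a heat-trace-type expression $|D|\sum_k e^{(\mu-\lambda_k)t}$ that must be bounded separately (by converting the sum back into $\int_D p_{(1-\alpha)t}(x,x)\,dx$). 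You instead apply Cauchy--Schwarz \emph{pointwise} via the semigroup identity $p_t(x,y)=\int p_{t/2}(x,z)p_{t/2}(z,y)\,dz \le \sqrt{p_t(x,x)p_t(y,y)}$, reduce everything to the diagonal in one step, and only then integrate the uniform bound over $y$. This yields the strictly stronger pointwise estimate $e^{\mu t}p_t(x,y)\le\bigl(4\pi(1-\mu/\lambda_1)t\bigr)^{-d/2}$ as an intermediate, avoids the $\int_D\phi_k$ and heat-trace manipulations entirely, and is arguably cleaner; both arrive at the same final constant. Your remark about the termwise comparison requiring the diagonal (so that $\phi_n(x)^2\ge 0$) is exactly the right observation and explains why the Cauchy--Schwarz reduction is indispensable in either version.
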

\begin{proof} We only consider the special case $\mu = \mu_1$ (which then allows us to invoke uniform bounds on $\mu_1/\lambda_1$ which depend only on the dimension) but it is easy to see that nothing changes in the proof in the general case.
We recall that
$$ p_t(x,y) = \sum_{k=1}^{\infty} e^{-\lambda_k t} \phi_k(x) \phi_k(y)$$
and that therefore
$$ \int_{D} e^{\mu_1 t} p_t(x,y) dy =  \sum_{k=1}^{\infty} e^{(\mu_1 -\lambda_k) t} \phi_k(x)  \int_{D} \phi_k(y) dy.$$
Using
$$ \left( \int_{D} \phi_k(y) \right)^2 \leq |D| \cdot \int_{D} \phi_k(y)^2 dy = |D|.$$
in combination with Cauchy-Schwarz leads to
\begin{align*}
 \sum_{k=1}^{\infty} e^{(\mu_1 -\lambda_k) t} \phi_k(x)  \int_{D} \phi_k(y) dy   &\leq \left(\sum_{k=1}^{\infty} e^{(\mu_1 -\lambda_k) t} \phi_k(x)^2 \right)^{1/2}  \\
 &\cdot \left( \sum_{k=1}^{\infty}e^{(\mu_1 -\lambda_k) t} \cdot |D|  \right)^{1/2}.
 \end{align*}
 This requires us to bound 
 $$ \sup_{x \in D} \sum_{k=1}^{\infty} e^{(\mu_1 -\lambda_k) t} \phi_k(x)^2 \qquad \mbox{and} \qquad \sup_{x \in D}  |D| \cdot \sum_{k=1}^{\infty}e^{(\mu_1 -\lambda_k) t}.$$
 For both these terms, the dimension of $D$ will start playing a role. Recalling Lemma 3, we will denote the best constant in the inequality by
  $$ \mu_1 \leq \alpha_d \cdot \lambda_1.$$
We note that $\alpha_d \leq 0.587$ and that $\alpha_d$ converges to 0 as $d \rightarrow \infty$. Then
 \begin{align*}
  \frac{\mu_1 - \lambda_k}{\lambda_k} &= \frac{\mu_1}{\lambda_k} - 1 \leq  \frac{\mu_1}{\lambda_1} - 1 \leq -(1-\alpha_{d}).
  \end{align*}
 We can now bound the first term by
\begin{align*}
  \sup_{x \in D} ~\sum_{k=1}^{\infty} e^{(\mu_1 -\lambda_k) t} \phi_k(x)^2  &\leq \sup_{x \in D}~ \sum_{k=1}^{\infty} e^{-\lambda_k (1-\alpha_d) t} \phi_k(x)^2 \\
  &= \sup_{x \in D}~ p_{(1-\alpha_d)t}(t,x,x).
 \end{align*}
 At this point we invoke domain monotonicity of the heat kernel: $p_t(\cdot, \cdot)$, the heat kernel associated to the domain $D$, is bounded from above by
 the heat kernel in $\mathbb{R}^d$ 
 $$ p_t(x,y) \leq \frac{1}{(4 \pi t)^{d/2}} \exp\left(-\frac{\|x-y\|^2}{4t}\right) \leq  \frac{1}{(4 \pi t)^{d/2}}$$
 and therefore
 $$   \sup_{x \in D} \sum_{k=1}^{\infty} e^{(\mu_1 -\lambda_k) t} \phi_k(x)^2  \leq  \frac{1}{(4 (1-\alpha_d) \pi t)^{d/2}}.$$
 A similar argument can be used for the second term: using the $L^2-$normalization of the eigenfunctions shows
\begin{align*}
|D| \cdot  \sum_{k=1}^{\infty}e^{(\mu_1 -\lambda_k) t} &= |D| \cdot  \int_{D}  \sum_{k=1}^{\infty}e^{(\mu_1 -\lambda_k) t} \phi_k(x)^2 dx \\
 &\leq |D| \cdot  \int_{D}  p_{(1-\alpha_d)t}(t,x,x) dx \leq |D|^2 \cdot \sup_{x \in D}  p_{ (1-\alpha_d)t}(t,x,x),
 \end{align*}
 which can be bounded by the same term as above.
Combining all these estimates, we arrive at
\begin{align*}
  \sup_{x \in D} \int_{D} e^{\mu_1 t} p_t(x,y) dy  \leq \frac{|D|}{(4 (1-\alpha_d) \pi t)^{d/2}}.
 \end{align*}
 \end{proof}

 \subsection{Finishing the Argument.}
 Recalling Lemma 1,
 \begin{align*}
 1 &\leq  e^{\mu_1 t}\int_{D} p_{t}(x_0, y)  dy +\left( e^{\mu_1 t} -   e^{\mu_1 t}\int_{D} p_{t}(x_0, y) dy\right)  \frac{\|u\|_{L^{\infty}(\partial D)}}{\|u\|_{L^{\infty}(D)}}
 \end{align*}
we observe that in the case of interest to us, the quantity $ \|u\|_{L^{\infty}(\partial D)}/\|u\|_{L^{\infty}(D)}$ is strictly smaller than 1 and we also note that
$$ e^{\mu_1 t}\int_{D} p_{t}(x_0, y)  dy +\left( e^{\mu_1 t} -   e^{\mu_1 t}\int_{D} p_{t}(x_0, y) dy\right) = e^{\mu_1 t}.$$
 Thus, replacing the first term by an upper bound on the first term (both times that it appears in the inequality), leads to an upper bound. Applying this together with Lemma 4,
 we end up with
 \begin{align*}
 1 &\leq   \frac{|D|}{(4 (1-\alpha_d) \pi t)^{d/2}}+ \left(e^{\mu_1 t} -  \frac{|D|}{(4 (1-\alpha_d) \pi t)^{d/2}} \right)  \frac{\|u\|_{L^{\infty}(\partial D)}}{\|u\|_{L^{\infty}(D)}}.
\end{align*}
We use the Szeg\H{o}-Weinberger inequality once more in the form
$$ \mu_1(D) \leq  \frac{c_d^{2/d}}{|D|^{2/d}} \cdot p_{d/2,1}^2.$$
At this point, we can invoke a scaling symmetry which has not been used so far -- we can exchange a dilation of time with a rescaling of time. This is reflected in
the fact that powers of $t$ and $|D|$ are not independent: we exploit this by introducing a rescaling of time, $\alpha = t |D|^{-2/d}$. The inequality then simplifies to,
for all $\alpha > 0$,
 \begin{align*}
 1 &\leq   \frac{1}{(4 (1-\alpha_d) \pi \alpha)^{d/2}}+ \left(\exp\left( c_d^{\frac{2}{d}} \cdot p_{d/2,1}^2 \alpha \right) -  \frac{1}{(4 (1-\alpha_d) \pi \alpha)^{d/2}} \right)  \frac{\|u\|_{L^{\infty}(\partial D)}}{\|u\|_{L^{\infty}(D)}}.
\end{align*}
This inequality can now be optimized in $\alpha$, the results will depend on the values of $\alpha_d$ and $p_{d/2,1}^2$. We also recall that
$ c_d = \pi^{d/2}/\Gamma\left(d/2 + 1\right)$
is the volume of the $d-$dimensional unit ball.
 We discuss the first few cases which will illustrate the general pattern. In the planar case, $d=2$, we have $p_{d/2,1} \sim 1.841$ and $\alpha_d \sim 0.587$.
Setting $\alpha = 0.258$, we obtain
$$  \frac{\|u\|_{L^{\infty}(\partial D)}}{\|u\|_{L^{\infty}(D)}} \geq \frac{1}{58.35}.$$\\
For $d=3$, we have $ p_{d/2,1} \sim 2.081$ and $ \alpha_d \sim 0.439$.
Setting $\alpha = 0.194$, we obtain
$$  \frac{\|u\|_{L^{\infty}(\partial D)}}{\|u\|_{L^{\infty}(D)}} \geq \frac{1}{22.03}. $$
For $d=4$, we have $ p_{d/2,1} \sim 2.299$ and  $\alpha_d \sim 0.3602$.
Setting $\alpha = 0.17$, we obtain
$$  \frac{\|u\|_{L^{\infty}(\partial D)}}{\|u\|_{L^{\infty}(D)}} \geq \frac{1}{14.71}. $$
The pattern continues in the same manner for $d=5,6,\dots$ and the constants
keep improving. It remains to understand the asymptotic behavior.
As $d \rightarrow \infty,$ we have $\alpha_d \rightarrow 0$ and hence for any fixed
$ \alpha > 1/(4\pi),$
one of the terms become small
$$ \lim_{d \rightarrow \infty}  \frac{1}{(4 (1-\alpha_d) \pi \alpha)^{d/2}} = 0.$$
Ignoring these terms (which can be made precise since they become sufficiently small for $d$ sufficiently large), using the precise form of $c_d$ and Lemma 3
\begin{align*}
 \frac{\|u\|_{L^{\infty}(\partial D)}}{\|u\|_{L^{\infty}(D)}} &\geq \exp\left( -c_d^{\frac{2}{d}} \cdot p_{d/2,1}^2 \alpha \right) \geq \exp\left(  - \frac{\pi}{\Gamma\left(\frac{d}{2} + 1\right)^{2/d}} \cdot (d+2) \alpha \right).
 \end{align*}
 The arising sequence in $d$ has a beneficial monotonicity property as well as a nicely defined limit which simplifies our reasoning by invoking
$$ \frac{\pi}{\Gamma\left(\frac{d}{2} + 1\right)^{2/d}} \cdot (d+2) \leq \lim_{d \rightarrow \infty} \frac{\pi}{\Gamma\left(\frac{d}{2} + 1\right)^{2/d}} \cdot (d+2)  = 2 e \pi$$
and therefore, invoking that $\alpha> 1/4\pi$ was arbitary
$$ \exp\left(  - \frac{\pi}{\Gamma\left(\frac{d}{2} + 1\right)^{2/d}} \cdot (d+2) \alpha \right) \geq \exp\left(  - 2e\pi \frac{1}{4\pi} \right)  = \sqrt{e^{-e}} = \frac{1}{3.89\dots}.$$
This is the best constant that can be achieved with this type of argument. 

\subsection{A Final Remark.}
There is one step in the proof that is guaranteed to be wasteful: when evaluating the second sum in \S 3.3, we have the inequalities
\begin{align*}
|D| \cdot  \sum_{k=1}^{\infty}e^{(\mu_1 -\lambda_k) t} &= |D| \cdot  \int_{D}  \sum_{k=1}^{\infty}e^{(\mu_1 -\lambda_k) t} \phi_k(x)^2 dx \\
 &\leq |D| \cdot  \int_{D}  p_{(1-\alpha_d)t}(x,x) dx \leq |D|^2 \cdot \sup_{x \in D}  p_{ (1-\alpha_d)t}(x,x).
 \end{align*}
 It is clear that this estimate is wasteful since an integral is replaced by a pointwise supremum. Indeed, the integral
 $$  \int_{D}  p_{t}(x,x) dx \qquad \mbox{is known as the heat trace.}$$
Luttinger \cite{lutt2, lutt3} proved that among all domains with fixed volume, the heat trace is maximized by the ball which would lead to
a better estimate. However, we are not aware of any
reasonably explicit estimates for the heat trace of the ball. Asymptotic expansions are available (see e.g. Davies \cite{davies}),
however, our argument would require explicit bounds.

\end{document}